\newtheorem*{thm}{Theorem}
\newtheorem*{cor}{Corollary}
\theoremstyle{remark}
\newtheorem*{remark}{Remark}
    \def\HSt{%
       \setbox0=\hbox{$\widehat{\mathit{HS}}$}
       \setbox1=\hbox{$\mathit{HS}$}
       \dimen0=1.1\ht0
       \advance\dimen0 by 1.17\ht1
       \smash{\mskip2mu\raise\dimen0\rlap{%
          \begin{turn}{180}
              {$\widehat{\phantom{\mathit{HS}}}$}
           \end{turn}} \mskip-2mu    
                \mathit{HS}
    }{\vphantom{\widehat{\mathit{HS}}}}{}}
    \def\HMt{%
       \setbox0=\hbox{$\widehat{\mathit{HM}}$}
       \setbox1=\hbox{$\mathit{HM}$}
       \dimen0=1.1\ht0
       \advance\dimen0 by 1.17\ht1
       \smash{\mskip2mu\raise\dimen0\rlap{%
          \begin{turn}{180}
              {$\widehat{\phantom{\mathit{HM}}}$}
           \end{turn}} \mskip-2mu    
                \mathit{HM}
    }{\vphantom{\widehat{\mathit{HM}}}}{}}
\newcommand{\HMr}{{\mathit{HM}}}
\newcommand{\HMtil}{\widetilde{\mathit{HM}}}
\newcommand{\spin}{\mathfrak{s}}
\newcommand{\ztwo}{\mathbb{F}}
\newcommand{\Pin}{\mathrm{Pin}(2)}
\begin{document}

\title{A remark on taut foliations and Floer homology}

\author{Francesco Lin}
\address{Department of Mathematics, Columbia University} 
\email{flin@math.columbia.edu}

\begin{abstract}It is well-known that the reduced Floer homology of a rational homology sphere admitting a taut foliation does not vanish. We strengthen this by showing that (when thought of as an $\ztwo[U]$-module) it also admits a direct $\ztwo$-summand. Hence, one could potentially detect counterexamples to the foliation part of the $L$-space conjecture via purely Floer theoretic computations.
\end{abstract}

\maketitle

We will formulate our discussion using the monopole Floer homology groups, but our results apply to Heegaard Floer homology as the theories are isomorphic\footnote{The dictionary is $\HMtil\cong\widehat{HF}$, $\HMt\cong HF^+$ and $\HMr\cong HF^{red}$.} (see \cite{KLT}, \cite{CGH} and subsequent papers). We will work $\ztwo=\mathbb{Z}/2\mathbb{Z}$ coefficients, and foliations and contact structures will always be cooriented.
\par
It is well-known that if a rational homology sphere $Y$ admits a taut foliation, then it is irreducible and
\begin{equation}\label{ineq}
\mathrm{dim}_{\ztwo} \HMtil_*(Y)>|H_1(Y;\mathbb{Z})|,
\end{equation}
see \cite{KMOS}. Part of the influential $L$-\textit{space conjecture} predicts that the converse holds \cite{Juh}. This has been verified in many classes of examples, including all graph manifolds \cite{HRRW} and large families of hyperbolic ones (see for example \cite{Dun}, \cite{Kri}, \cite{San}).
\par
Notice that the inequality (\ref{ineq}) is equivalent to the condition 
\begin{equation}\label{nonzero}
\HMr_*(Y)\neq 0,
\end{equation}
i.e. the reduced Floer group is non-vanishing. Unlike $\HMtil_*(Y)$, the reduced group $\HMr_*(Y)$ also admits the structure of finitely generated graded $\ztwo[U]$-module (where $U$ has degree $-2$). Our main result is that the existence of a taut foliation implies restrictions on such module structure which strengthen (\ref{nonzero}). 

\begin{thm}Suppose the rational homology sphere $Y$ admits a taut foliation. Then the $\ztwo[U]$-module $\HMr_*(Y)$ admits a direct $\ztwo$ summand.
\end{thm}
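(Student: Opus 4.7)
The plan is to combine Eliashberg--Thurston's bridge from taut foliations to contact structures with the non-vanishing of the contact invariant. First, by Eliashberg--Thurston (with $C^0$-refinements of Bowden and Kazez--Roberts), the taut foliation $\mathcal{F}$ on $Y$ admits a $C^0$-small perturbation to a contact structure $\xi$, and $([-1,1] \times Y, \omega)$ carries a symplectic form exhibiting $\xi$ as weakly symplectically fillable. By the Ozsv\'ath--Szab\'o non-vanishing theorem for contact invariants of weakly fillable structures (transferred to monopole Floer via the $\HMt \cong HF^+$ isomorphism), the contact class $\con(\xi) \in \HMt_*(-Y, \spin_\mathcal{F})$ is non-zero and satisfies $U\con(\xi) = 0$.

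Next, I would observe that $\con(\xi)$ automatically lies in the reduced theory. Indeed, for a rational homology sphere $\HMb_*$ is a $\ztwo[U, U^{-1}]$-module on which $U$ acts injectively, so any $U$-killed class in $\HMt_*$ lies in $\HMr_* = \ker(\HMt_* \to \HMb_*)$. Thus $\con(\xi)$ yields a non-zero socle element (i.e.\ element of $\ker U$) of $\HMr_*(-Y)$, and hence also of $\HMr_*(Y)$ by Poincar\'e-type duality on rational homology spheres.

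The main obstacle is upgrading ``non-zero socle element'' to ``generator of a direct $\ztwo$-summand.'' In a finitely generated graded $\ztwo[U]$-module with nilpotent $U$-action, the number of $\ztwo$-summands equals $\dim(\ker U) - \dim(\ker U \cap \mathrm{Im}\, U)$: a socle element splits off precisely when it is not $U$-divisible. The $U$-torsion of $\con(\xi)$ comes for free from its hat-lift $\HMtil_* \to \HMt_*$, but $U$-divisibility is not automatic (for instance, in $\ztwo[U]/U^2$ the nonzero socle element $U$ fails to split off). To close this gap I would try two approaches in parallel: (i) analyze the cobordism map $\HMt_*(-Y) \to \HMt_*(S^3)$ induced by (a puncturing of) the weak filling, which sends $\con(\xi)$ to the top of the tower, and use $U$-equivariance together with the module structure on both sides to control $\mathrm{Im}\, U$ inside $\HMr_*$; (ii) leverage the $\Pin$-equivariant refinement of monopole Floer homology (a framework developed in part by the author), where the additional operators acting on the contact class impose further algebraic constraints on $\HMr_*(Y)$ that may directly enforce the $\ztwo$-summand.
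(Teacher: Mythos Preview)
Your proposal correctly isolates the obstacle: a non-zero element of $\ker U$ in $\HMr_*$ need not split off as a direct $\ztwo$-summand. Neither of your suggested fixes closes this. In approach (i), the target $\HMt_*(S^3)$ has surjective $U$-action, so $U$-equivariance of the cobordism map imposes no constraint on $U$-divisibility of $\con(\xi)$ in the source; equivalently, on reduced homology the map is zero since $\HMr_*(S^3)=0$. Approach (ii) via $\Pin$-symmetry only constrains self-conjugate spin$^c$ structures and does not by itself prevent $\con(\xi)$ from being $U$-divisible; the paper invokes $\Pin$-methods only for a separate refinement in the almost $L$-space discussion, not for the Theorem.

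The idea you are missing is that a taut foliation perturbs to \emph{two} contact structures, a positive $\xi_+$ and a negative $\xi_-$, making $I\times Y$ a symplectic cobordism from $(-Y,\xi_-)$ to $(Y,\xi_+)$. Capping both ends symplectically yields a closed symplectic $X$; Taubes' theorem plus the pairing formula give $\langle c_-, c_+\rangle = 1$ under the perfect pairing $\HMr_*(Y)\otimes\HMr_*(-Y)\to\ztwo$, where $c_\pm=\con(\xi_\pm)\in\HMr_*(\mp Y)$ (identified with the relative invariants of the caps via Echeverria's naturality). Now write $\HMr_*(Y)\cong\bigoplus_i\ztwo[U]/U^{k_i}$; under duality the induced action $U_*$ on $\HMr_*(-Y)$ runs in the opposite direction along each tower. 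Since $U c_-=0$, the components of $c_-$ lie at the bottoms $U^{k_i-1}$; since $U_* c_+=0$, those of $c_+$ lie at the tops $1$. The non-zero pairing then forces some tower to have top equal to bottom, i.e.\ $k_i=1$. The essential ingredient your outline lacks is this second contact structure and the duality argument that plays the two $U$-annihilation conditions against each other.
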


\begin{cor}
If the statement in the $L$-space conjecture regarding taut foliations holds, then for any rational homology sphere $Y$, the $\ztwo[U]$-module $\HMr_*(Y)$ either vanishes or admits a direct $\ztwo$-summand.\end{cor}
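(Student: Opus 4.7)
A $\ztwo$ direct summand of the finitely generated torsion $\ztwo[U]$-module $\HMr_*(Y)$ is equivalent to the existence of a nonzero $x\in\HMr_*(Y)$ with $Ux=0$ and $x\notin U\cdot\HMr_*(Y)$: the structure theorem decomposes such a module as $\bigoplus_i\ztwo[U]/U^{n_i}$, and a $\ztwo$ summand exists precisely when some $n_i=1$. My strategy is to realize $x$ as the reduction of the relative monopole invariant of a strong symplectic filling of $Y$.

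I would begin by applying Eliashberg-Thurston to perturb $\mathcal{F}$ to a weakly symplectically semi-fillable contact structure $\xi$ on $Y$. Since $Y$ is a rational homology sphere, so $H^2(Y;\mathbb{R})=0$, Eliashberg's deformation trick upgrades the weak semi-filling to an honest strong symplectic filling $(X_0,\omega_0)$ of $(Y,\xi)$; the spin$^c$ structure $\spin_\xi$ is the canonical one of $\omega_0$. The relative monopole Floer invariant of $(X_0,\omega_0)$ defines a class $\Psi\in\HMt_*(Y,\spin_\xi)$, and by the standard TQFT argument---cap $X_0$ off to a closed symplectic $X$ with $b_2^+(X)\geq 2$, use Taubes's $SW_X(\spin_\omega)=\pm 1$ and cut-paste the invariant along $Y$---one shows $\Psi\neq 0$.

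A dimension count in the canonical spin$^c$ then yields $U\Psi=0$: the relative Seiberg-Witten moduli space is $0$-dimensional, so the $U$-constrained moduli is empty. Therefore $\Psi$ lies in $\ker(U\colon\HMt\to\HMt)$, which decomposes as the bottom of the tower $\tow$ together with the socle $\ker(U)\cap\HMr_*$. The next step is to separate off the reduced part: using a filtration argument comparing $\Psi$ with the distinguished tower generator, one extracts a nonzero class $x\in\HMr_*(Y,\spin_\xi)$ with $Ux=0$.

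The main obstacle is verifying $x\notin U\cdot\HMr_*(Y,\spin_\xi)$. The natural attack is a grading argument: the absolute grading of $\Psi$ is pinned down by $(X_0,\omega_0)$ (essentially by $-d_3(\xi)$ up to normalization), and one needs to show this is the top grading carrying nonzero $\HMr_*$ content. For this I would study the monopole Floer homology $\HMt_*(Y,\spin_\xi;-r\omega)$ perturbed by a large multiple of a closed 2-form $\omega$ adapted to the foliation: by the type of analysis carried out in \cite{KMOS}, this perturbed group collapses to essentially the tower $\tow$, so above the grading of $x$ the unperturbed reduced group must vanish. Since $U$ has degree $-2$, no candidate $y$ with $Uy=x$ can exist. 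This grading control is the key new input beyond the non-vanishing statement (\ref{nonzero}).
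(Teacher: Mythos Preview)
Your proposal targets the Theorem rather than the Corollary. In the paper the Corollary is a two-line deduction: for irreducible $Y$ with $\HMr_*(Y)\neq 0$ the assumed $L$-space conjecture supplies a taut foliation and the Theorem then gives the $\ztwo$-summand; for reducible $Y$ (which never carry taut foliations) one appeals to the Bloom--Mrowka--Ozsv\'ath connected sum formula to pass to prime pieces. You never address the reducible case.

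More seriously, even as an argument for the Theorem your key step has a genuine gap. Working with a single contact perturbation $\xi$, you must show both $Ux=0$ and $x\notin U\cdot\HMr_*(Y)$. For the latter you propose to perturb by a large multiple of a closed $2$-form $\omega$ on $Y$ and claim the perturbed group collapses to $\tow$, forcing the unperturbed $\HMr_*$ to vanish above the grading of $x$. But $Y$ is a rational homology sphere, so every closed $2$-form on $Y$ is exact and such a perturbation does not change the Floer groups at all; the local-coefficient mechanism behind \cite{KMOS} lives on the $4$-manifold (or requires $b_1(Y)>0$) and gives no grading control on $\HMr_*(Y)$ with $\ztwo$-coefficients. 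Nothing in your outline rules out, for instance, $\HMr_*(Y,\spin_\xi)\cong\ztwo[U]/U^2$ with $x$ the bottom class. The paper sidesteps this by using \emph{both} contact perturbations $\xi_\pm$: the invariants $c_-\in\HMr_*(Y)$ and $c_+\in\HMr_*(-Y)$ each lie in the kernel of the respective $U$-action, and the pairing $\langle c_-,c_+\rangle=1$ (from Taubes plus the pairing theorem) forces, via Poincar\'e duality, a cyclic summand in which the bottom of the $U$-tower coincides with the top, i.e.\ an $\ztwo$-summand. The second contact structure $\xi_+$ is precisely the missing witness that $c_-\notin U\cdot\HMr_*(Y)$ which your grading argument was trying to produce.
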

\begin{proof}[Proof of Corollary]
If $Y$ is irreducible, this directly follows from the Theorem. In general, the connected sum formula of Bloom-Mrowka-Ozsv\'ath (see \cite{Lin} for an exposition) shows that if $Y$ is such that $\HMr_*(Y)$ has no $\ztwo$-summands, then the same holds for all of its prime summands.
\end{proof}

In particular, the Corollary implies that one could potentially detect counterexamples to the $L$-space conjecture via purely Floer theoretic computations.
\begin{remark}
The author is not aware of heuristics in Seiberg-Witten theory possibly suggesting that for every rational homology sphere $Y$ the module $\HMr_*(Y)$ either vanishes or admits a direct $\ztwo$-summand.
\end{remark}

\begin{proof}[Proof of Theorem]
This follows from refining the usual proof of non-vanishing; we will adopt the strategy discussed in Chapter $41$ of \cite{KM} using TQFT properties of monopole Floer homology, which we briefly recall.
\par
The works \cite{ET}, \cite{Bow}, \cite{KR} imply that the taut foliation can be perturbed in a $C^0$-small way into two contact structures $\xi_{\pm}$, respectively positive and negative, such that the cobordism $I\times Y$ is a weak symplectic filling of its boundary $(-Y,\xi_-)\coprod(Y,\xi_+)$. Because $Y$ is a rational homology sphere, this can be modified near the boundary components to be a strong symplectic filling \cite{OO}. We can then attach strong symplectic caps $(W_{\pm},\omega_{\pm})$ of $(\pm Y,\xi_{\pm})$ \cite{EK} (for simplicity, we can also assume that both have $b^+\geq 2$) to obtain a closed symplectic $4$-manifold $(X,\omega)$. Notice that $W_{\pm}$ have boundary $\mp Y$ respectively.
\par
Now, Taubes' theorem \cite[Section $41.2$]{KM} implies that the Seiberg-Witten invariant $\mathfrak{m}(X,\spin_{\omega})$ is $1\in\ztwo$, where $\spin_\omega$ denotes the canonical spin$^c$ structure. Because in our decomposition both pieces have $b^+\geq 1$, we can also compute this value using the pairing theorem\footnote{We remark that a spin$^c$ structure on $X$ is determined by its restrictions to $W_{\pm}$ because $Y$ is a rational homology sphere, so that we do not use local coefficients as in the general proof in \cite[Chapter $41$]{KM}.}
\begin{equation}\label{pairing}
\langle \psi_{(W_-,\spin_{\omega_-})},\psi_{(W_+,\spin_{\omega_+})}\rangle=\mathfrak{m}(X,\spin_{\omega})=1\in\ztwo,
\end{equation}
where we consider the relative invariants
\begin{equation*}
\psi_{(W_{\pm},\spin_{\omega_{\pm}})}\in \HMr_*(\mp Y)
\end{equation*}
of the cobordisms, and the perfect pairing 
\begin{equation}\label{pair}
\langle\cdot,\cdot\rangle: \HMr_*(Y)\otimes \HMr_*(-Y)\rightarrow \ztwo
\end{equation}
induced by Poincar\'e duality $\HMr^*(Y)\equiv\HMr_*(-Y)$, see \cite[(3.22)]{KM}. 
\par
This directly implies the usual statement that $\HMr_*(Y)\neq0$. To refine this, we consider the classes
\begin{equation*}
c_{\pm}:=\textbf{c}(\xi_{\pm})\in \HMr_*(\mp Y)
\end{equation*}
where (by abuse of notation) we denote by $\textbf{c}(\xi_{\pm})$ the image of the contact invariant (which is an element in $\HMt_*(\mp Y)$ \cite{KMOS}) in reduced Floer homology. The work \cite{Ech} on naturality for strong symplectic cobordisms identifies the contact invariant of a contact three-manifold with the relative invariant of any strong symplectic cap. In our situation, this implies
\begin{equation*}
c_{\pm}=\psi_{(W_\pm,\spin_{\omega_\pm})}
\end{equation*}
so that $c_{\pm}$ have non-zero pairing under (\ref{pair}). As the contact invariant $\textbf{c}$ always belongs to the kernel of $U$ (see \cite{Mun}), we have that $U\cdot{c}_{\pm}=0\in \HMr_*(\mp Y)$. With this in mind, we decompose $\HMr_*(Y)$ as a direct sum of cyclic $\ztwo[U]$-modules:
\begin{equation}\label{HMpl}
\HMr_*(Y)\cong \bigoplus_i \ztwo[U]/U^{k_i},
\end{equation}
where each summand implicitly has some grading shift. This naturally induces a decomposition of the dual $\ztwo[U]$-module $\HMr_*(-Y)$: we identify as vector spaces
\begin{equation}\label{HMmin}
\HMr_*(-Y)\equiv \bigoplus_i \ztwo[U]/U^{k_i},
\end{equation}
and reverse the direction of the $U$-action; let us denote it by $U_*$ to prevent confusion. Because $U\cdot c_-=0$, the non-zero components of $c_-$ all lie at the bottom of the $U$-towers in (\ref{HMpl}), i.e. are of the form 
 \begin{equation*}
 U^{k_i-1}\in \ztwo[U]/U^{k_i}.
 \end{equation*}
Similarly, because $U_*\cdot c_+=0$, the non-zero components of $c_+$ all lie at the bottom of the $U_*$-towers, which correspond to the elements $1\in \ztwo[U]/U^{k_j}$ in (\ref{HMmin}). Because $\langle c_-,c_+\rangle\neq0$, $c_-$ and $c_+$ share a non-zero component in the same cyclic $\ztwo[U]$-summand, which is therefore forced to be a copy of $\ztwo$.
\end{proof}

\begin{remark}
As pointed out by Baldwin and Sivek, the proof also provides concrete obstructions for a contact structure $\xi$ to obtained (up to isotopy) from a taut foliation via the Eliashberg-Thurston perturbation (in terms of the interplay between $U$ and the contact invariant of $\xi$). For instance, these can be applied to the examples constructed in \cite{Kar}.
\end{remark}

\vspace{0.3cm}

Of course, at the practical level the determination of the $\ztwo[U]$-module structure on $\HMr_*(Y)$ is significantly more complicated than the already challenging computation of $\mathrm{dim}_{\ztwo} \HMtil_*(Y)$. However, we point out how the simplest case in which $\HMr_*(Y)$
is a cyclic $\ztwo[U]$-module, i.e.
\begin{equation*}
\HMr_*(Y)\cong \ztwo[U]/U^k \text{ for some }k\geq1,
\end{equation*}
can be approached directly by looking at the spin$^c$ decomposition and relative gradings in $\HMtil_*(Y)$. Notice that the latter are both explicitly computable in concrete examples via the \texttt{bfh\_python} package \cite{BZ}.
\par
Because $\HMtil_*$ is the mapping cone of the action of $U$ on $\HMt_*$, we have that $\HMr_*(Y)\cong \ztwo[U]/U^k$ for some $k\geq1$ if and only if
\begin{equation*}
\mathrm{dim}_{\ztwo}\HMtil_*(Y)=|H_1(Y;\mathbb{Z})|+2,
\end{equation*}
i.e. $Y$ is an \textit{almost $L$-space} in the sense of \cite{BS} (see \cite{Binn} for related general results). The Corollary then implies that, assuming the $L$-space conjecture, for any almost $L$-space $\HMr_*(Y)=\ztwo$. Furthermore, by conjugation symmetry, the latter is necessarily supported in a spin structure $\spin_0$. Then the main result of \cite{HKL} (which uses $\Pin$-symmetry) implies that this copy of $\ztwo$ is either in the same grading as the bottom of the $U$-tower in $\HMt_*$ or one below it. We record these observations as follows.
\begin{cor}
Assuming the statement in the $L$-space conjecture regarding taut foliations holds. Then any almost $L$-space $Y$ has a spin structure $\spin_0$ such that
\begin{equation*}
\HMtil_*(Y,\spin)
=\begin{cases}
\ztwo^3\text{ if }\spin=\spin_0\\
\ztwo \text{ otherwise}
\end{cases}
\end{equation*}
and furthermore $\HMtil_*(Y,\spin_0)$ is concentrated in exactly two consecutive gradings.
\end{cor}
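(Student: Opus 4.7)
The plan is to combine the cyclic decomposition of $\HMr_*(Y)$ with a grading-wise analysis of the mapping cone long exact sequence
\begin{equation*}
\cdots\to \HMt_{k+1}(Y,\spin)\xrightarrow{U}\HMt_{k-1}(Y,\spin)\to\HMtil_k(Y,\spin)\to\HMt_k(Y,\spin)\xrightarrow{U}\HMt_{k-2}(Y,\spin)\to\cdots,
\end{equation*}
and then to impose the previous Corollary together with the $\Pin(2)$-based grading constraint of \cite{HKL}.

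First, decompose each $\HMt_*(Y,\spin)$, as a graded $\ztwo$-vector space, as an infinite $U$-tower with bottom at some grading $d_0(\spin)$, plus the torsion submodule $\HMr_*(Y,\spin)\cong \bigoplus_i \ztwo[U]/U^{k_i}$. The tower contributes one dimension to $\ker U$ (at $d_0(\spin)$) and nothing to $\operatorname{coker} U$; each cyclic summand of $\HMr$ contributes one dimension to each of $\ker U$ and $\operatorname{coker} U$. Summing over all gradings and spin$^c$ structures gives
\begin{equation*}
\dim_{\ztwo}\HMtil_*(Y)=|H_1(Y;\mathbb{Z})|+2\sum_{\spin}\bigl(\text{number of cyclic summands of } \HMr_*(Y,\spin)\bigr).
\end{equation*}
If $Y$ is an almost $L$-space the left-hand side equals $|H_1|+2$, so exactly one spin$^c$ structure $\spin_0$ has $\HMr_*(Y,\spin_0)\neq 0$, and this is a single cyclic module $\ztwo[U]/U^k$. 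Assuming the $L$-space conjecture, the first Corollary forces $\HMr_*(Y)$ to contain a direct $\ztwo$-summand; but $\ztwo[U]/U^k$ is indecomposable for $k\geq 2$, so $k=1$ and $\HMr_*(Y,\spin_0)\cong \ztwo$. Conjugation symmetry $\HMr_*(Y,\spin_0)\cong \HMr_*(Y,\overline{\spin}_0)$ then forces $\spin_0$ to be self-conjugate, i.e.\ to come from a spin structure.

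Feeding these data back into the grading-wise formula yields, for $\spin\neq\spin_0$, just the tower bottom, so $\HMtil_*(Y,\spin)\cong\ztwo$ in grading $d_0(\spin)$. For $\spin_0$ there are three contributions: the tower bottom at grading $d_0:=d_0(\spin_0)$; the kernel of $U$ on $\HMr_*(Y,\spin_0)$ at some grading $d_1$; and, because the exact sequence lifts cokernel contributions from grading $j$ to grading $j+1$, the cokernel of $U$ on $\HMr_*(Y,\spin_0)$ at grading $d_1+1$. The main result of \cite{HKL} cited in the preceding discussion then forces $d_1\in\{d_0,d_0-1\}$, so the three generators of $\HMtil_*(Y,\spin_0)$ land in $\{d_0,d_0+1\}$ or $\{d_0-1,d_0\}$ --- two consecutive gradings in either case.

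The main obstacle is really the last step. The preceding dimension and spin$^c$-structure counts are routine consequences of the Theorem, its first Corollary, and the mapping cone sequence, but concluding that the three generators of $\HMtil_*(Y,\spin_0)$ actually occupy two (rather than three) consecutive gradings genuinely requires the $\Pin(2)$-equivariant refinement of \cite{HKL}; without this input one would only locate these three generators in at most three distinct gradings.
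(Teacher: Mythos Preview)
Your proof is correct and follows essentially the same route as the paper: the mapping cone relation gives $\dim_{\ztwo}\HMtil_*(Y)=|H_1|+2$ iff $\HMr_*(Y)$ is a single cyclic summand $\ztwo[U]/U^k$, the first Corollary forces $k=1$, conjugation symmetry forces $\spin_0$ to be spin, and the $\Pin(2)$-result of \cite{HKL} pins down the grading. The paper states these steps tersely in the paragraph preceding the Corollary; your version adds the explicit bookkeeping of where the three generators of $\HMtil_*(Y,\spin_0)$ land (at $d_0$, $d_1$, and $d_1+1$) and checks that both cases $d_1\in\{d_0,d_0-1\}$ yield two consecutive gradings, which the paper leaves implicit.
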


It would be very interesting if one could prove that the conclusion of this result holds (even just in large families) without assuming the $L$-space conjecture. Dunfield has informed the author that computations with \texttt{bfh\_python} confirm this condition holds for 2,770 hyperbolic almost $L$-spaces which arise as double branched covers of non-alternating links in $S^3$ with at most 15 crossings. Finally, we remark that severe restrictions on the topology of \textit{Stein} fillings for the class of manifolds arising in the Corollary are proved in \cite{LinStein}; it is natural to wonder whether there are extensions of such results to other kinds of fillability properties more closely related to taut foliations.
\vspace{0.5cm}

\textit{Acknowledgements. }The author thanks Antonio Alfieri, John Baldwin and Nathan Dunfield for some helpful conversations. This work was partially supported by the Alfred P. Sloan Foundation and NSF grant DMS-2203498.

\vspace{0.3cm}

\bibliographystyle{alpha}
\bibliography{biblio}

\vspace{0.3cm}

\end{document}